\DeclareSymbolFont{SY}{U}{psy}{m}{n}
\DeclareMathSymbol{\emptyset}{\mathord}{SY}{'306}
\newtheorem{theorem}{Theorem}{\bf}{\it}
\newtheorem{lemma}[theorem]{Lemma}{\bf}{\it}
\newtheorem{condition}{Condition}
\DeclareMathOperator{\argmax}{arg max}
\theoremstyle{definition}  
\title[Eplett's theorem for self--converse generalised tournaments]{Eplett's theorem for self--converse generalised tournaments}
\author[Erik ~Th\"ornblad]{Erik Th\"ornblad}
 \address{Department of Mathematics, Uppsala University, Box 480, S-75106 Uppsala, Sweden.}
 \email{erik.thornblad@math.uu.se}
 \date{\today}
\begin{document}
\begin{abstract}
The converse of a tournament is obtained by reversing all arcs. If a tournament is isomorphic to its converse, it is called self--converse. Eplett provided a necessary and sufficient condition for a sequence of integers to be realisable as the score sequence of a self--converse tournament. In this paper we extend this result to generalised tournaments. 
\end{abstract}
\maketitle

\section{Introduction and results}\label{sec:intro}
A \emph{generalised tournament} $G=(V(G),\alpha)$ is a set $V(G)=\{1,\dots, n\}$ of vertices along with a function $\alpha:V(G)\times V(G) \to [0,1]$, such that $\alpha(i,j)+\alpha(j,i)=1$ for all $(i,j)\in V(G)\times V(G)$, $i\neq j$, and $\alpha(i,i)=0$ for all $i\in V(G)$. If $G=(V(G),\alpha)$ and $\alpha\in \{0,1\}$, then we say that $G$ is a (non--generalised) \emph{tournament}. Given a vertex $i\in V(G)$, the \emph{outdegree} of $i$ is defined as $d_i=\sum_{j\in V(G)}\alpha(i,j)$. The sequence $(d_i)_{i=1}^n$ of outdegrees of $G$ is called the \emph{score sequence} of $G$.

A natural question is to ask for a condition that characterises those sequences which can be realised as the score sequence of some generalised tournament.

\begin{condition}
A sequence $(d_i)_{i=1}^n$ of non--negative real numbers is said to satisfy condition \textbf{I} if 
\begin{align}
 \sum_{i\in J} d_i \geq \binom{|J|}{2}
\end{align}
for all $J\subseteq \{1,2,\dots, n\}$, with equality for $J=\{1,\dots, n\}$. 
\end{condition}

If $(d_i)_{i=1}^n$ is the score sequence of some tournament, then one can easily see that condition \textbf{I} must be satisfied, for the subtournament induced by $J$ must have at least $\binom{|J|}{2}$ edges. One of the classical results in graph theory is the sufficiency of condition \textbf{I}, i.e. showing that if $(d_i)_{i=1}^n$ satisfies condition \textbf{I}, then there is a tournament with score sequence $(d_i)_{i=1}^n$. More precisely, Landau \cite{Landau1953} showed that a non--decreasing sequence $(d_i)_{i=1}^n$ consisting of non--negative \emph{integers} is the score sequence of some tournament if and only if condition \textbf{I} is satisfied. Subsequently Moon \cite{Moon63} extended this to the setting of generalised tournaments, showing that a non--decreasing sequence $(d_i)_{i=1}^n$ consisting of non--negative \emph{reals} is the score sequence of some generalised tournament if and only if condition \textbf{I} is satisfied. 

In this paper we study a related problem for the class of self--converse generalised tournaments. Two generalised tournaments $G_1=(V(G_1),\alpha_1)$ and $G_2=(V(G_2),\alpha_2)$ are \emph{isomorphic} if there exists a bijection $\rho : V(G_1) \to V(G_2)$ such that $\alpha_1(i,j)=\alpha_2(\rho(i),\rho(j))$ for all $i,j\in V(G_1)$.  The \emph{converse} of a generalised tournament $G=(V(G),\alpha)$ is the tournament $G'=(V(G),\alpha')$ where $\alpha'(i,j)=1-\alpha(i,j)$ for all $i,j\in V(G)$. One should think of the converse $G'$ as being obtained by reversing all arcs of $G$. A generalised tournament $G$ is \emph{self--converse} if $G$ and $G'$ are isomorphic. 

The following condition is central in the study of which sequences are realisable by self--converse generalisted tournaments.

\begin{condition}
A non--decreasing sequence $(d_i)_{i=1}^n$ of non--negative real numbers is said to satisfy condition \textbf{II} if 
\begin{align}
 d_i+d_{n+1-i} = n-1
\end{align}
for all $i=1,2,\dots, n$.
\end{condition}

It is a two--line argument that any self--converse tournament must have a score sequence satisfying condition \textbf{II}. Eplett proved sufficiency, but only for non--generalised tournaments.

\begin{theorem}[\cite{Eplett1979}] \label{thm:Eplett}
 A non--decreasing sequence $(d_i)_{i=1}^n$ of non--negative integers is the score sequence of some self--converse (non--generalised) tournament if and only if conditions \textbf{I} and \textbf{II} are satisfied.
\end{theorem}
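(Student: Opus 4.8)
The plan is to prove the nontrivial (sufficiency) direction by reducing to the construction of a tournament that is symmetric under the order--reversing involution, and then to realise such a tournament integrally by induction.

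Necessity is immediate: a self--converse tournament is a tournament, so condition \textbf{I} holds by the argument already recalled; and if $\rho$ is an isomorphism from $G$ to $G'$, then the score of $\rho(i)$ equals the indegree of $i$, namely $n-1-d_i$, so $\rho$ carries the multiset of scores onto its reflection and, once the scores are sorted, this forces $d_i+d_{n+1-i}=n-1$, i.e. condition \textbf{II}.

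For sufficiency, I would first observe that it suffices to build a tournament $\alpha$ on $\{1,\dots,n\}$ with scores $(d_i)$ that is symmetric under $\rho(i)=n+1-i$ in the sense that $\alpha(i,j)+\alpha(n+1-i,n+1-j)=1$ for all $i,j$. Any such $\alpha$ is self--converse with $\rho$ as the isomorphism, and a direct summation shows $d_{n+1-i}=n-1-d_i$ automatically; in particular only the scores $d_1,\dots,d_{\lfloor n/2\rfloor}$ (and, when $n$ is odd, the forced middle value $(n-1)/2$) need to be matched, the rest being determined by the symmetry. As a check that the associated linear system is solvable, note that Moon's theorem gives a generalised tournament $\beta$ with scores $(d_i)$, and its symmetrisation $\gamma(i,j)=\tfrac12\bigl(\beta(i,j)+1-\beta(n+1-i,n+1-j)\bigr)$ is a $\rho$--symmetric generalised tournament with the same scores; this already settles the generalised analogue, but for Theorem \ref{thm:Eplett} one needs an integral $\alpha$.

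To produce $\alpha$ integrally I would induct on $n$, peeling off the extreme pair $\{1,n\}$. Orient the arc $n\to 1$, let $B\subseteq\{2,\dots,n-1\}$ be the set of vertices beaten by $n$, and let $\rho$--symmetry force all arcs incident to $1$; the remaining tournament on $\{2,\dots,n-1\}$ is then $\rho$--symmetric for the restricted (still order--reversing) involution, and each middle vertex $j$ has reduced score $d_j'=d_j-1+\mathbbm{1}[j\in B]-\mathbbm{1}[n+1-j\in B]$. One checks that $d_j'+d_{n+1-j}'=n-3$ regardless of $B$, so condition \textbf{II} passes to the reduced sequence, and that the total reduced score equals $\binom{n-2}{2}$, so the equality case of condition \textbf{I} is automatic. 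The crux is to choose $B$ (of the prescribed size $d_n-1$) so that the reduced sequence is non--decreasing and still satisfies the inequalities of condition \textbf{I}; applying the induction hypothesis and gluing back the two removed vertices then yields the desired self--converse tournament, with the small cases $n\le 2$ handled directly.

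The main obstacle is precisely this choice of $B$. The naive greedy pick (let $n$ beat the top $d_n-1$ middle vertices) can drive a small reduced score negative, because the symmetry term $-\mathbbm{1}[n+1-j\in B]$ penalises exactly the low--score vertices; so the selection must balance the set $B$ against its mirror $\{n+1-j:j\in B\}$. I expect to establish existence of a feasible $B$ by a Hall-/defect-type argument, using condition \textbf{I} for the original sequence to certify the required inequalities, with the fractional feasibility guaranteed by the symmetrisation above serving as the continuous shadow of this combinatorial count. Verifying that a single integral $B$ can simultaneously maintain monotonicity and all prefix inequalities of condition \textbf{I} is the delicate and decisive step.
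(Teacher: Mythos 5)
Your necessity argument is correct, and your structural reduction is well chosen: realising the sequence by a tournament $\alpha$ on $\{1,\dots,n\}$ with $\alpha(i,j)+\alpha(n+1-i,n+1-j)=1$ for all $i\neq j$ (the identity must be restricted to $i\neq j$, since $i=j$ would force $\alpha(i,i)+\alpha(n+1-i,n+1-i)=1$ against the zero diagonal) is exactly the right target --- indeed, the paper later relies on the fact, extracted from Eplett's proof, that the isomorphism can be taken to be precisely this order--reversing involution, with $\lfloor n/2\rfloor$ transpositions and, for $n$ odd, a single fixed point of score $(n-1)/2$. Your bookkeeping for the peeling step also checks out: orienting $n\to 1$, symmetry forces the arcs at vertex $1$ to mirror those at vertex $n$, giving vertex $1$ score $n-1-d_n=d_1$, the reduced scores are $d_j'=d_j-1+\mathbbm{1}[j\in B]-\mathbbm{1}[n+1-j\in B]$ with $d_j'+d_{n+1-j}'=n-3$ for every choice of $B$, and since the reduced multiset is partitioned into pairs summing to $n-3$, its sorted version automatically satisfies condition \textbf{II} (though you should state this re--sorting and the re--alignment of the involution explicitly before invoking the induction hypothesis).

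However, the proof is not complete, and you say so yourself: the existence of a feasible $B$ of size $d_n-1$ keeping the sorted reduced sequence within condition \textbf{I} is only ``expected'' from an unspecified Hall--/defect--type argument, and this is precisely the content of Eplett's theorem rather than a routine verification. The difficulty is real: for $n=4$ and scores $(1,1,2,2)$, the greedy choice $B=\{3\}$ yields $d_2'=1-1+0-1=-1$, which is infeasible, while $B=\{2\}$ yields reduced scores $(1,0)$ and succeeds --- so the selection rule genuinely interacts with the low end of the sequence and its existence in general must be proved, not asserted. Your symmetrisation $\gamma(i,j)=\tfrac{1}{2}\bigl(\beta(i,j)+1-\beta(n+1-i,n+1-j)\bigr)$ of Moon's realisation is a correct and cheap proof of the \emph{generalised} analogue (and is close in spirit to how the present paper proceeds), but rounding a $\rho$--symmetric fractional tournament to an integral one is not automatic, so it cannot stand in for the missing combinatorial lemma. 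For comparison: the paper itself does not prove this statement at all --- it is quoted from \cite{Eplett1979} and used as a black box --- so your attempt should be measured against Eplett's original argument, and as it stands the decisive step of that argument is the one your proposal leaves open.
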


As we shall show, Eplett's result does extend in the natural way to real sequences and self--converse generalised tournaments. The following is our main result.

\begin{theorem} \label{thm:real}
  A non--decreasing sequence $(d_i)_{i=1}^n$ of non--negative real numbers is the score sequence of some self--converse generalised tournament if and only if conditions \textbf{I} and \textbf{II} are satisfied.
\end{theorem}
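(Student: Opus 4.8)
The plan is to dispatch necessity by the standard argument and to prove sufficiency by a symmetrisation trick that is available for real--valued (generalised) tournaments but not for $\{0,1\}$--valued ones; this is exactly what makes Theorem~\ref{thm:real} easier to establish than Theorem~\ref{thm:Eplett}. For necessity, suppose $(d_i)_{i=1}^n$ is the score sequence of a self--converse generalised tournament $G=(V,\alpha)$. Condition \textbf{I} is immediate from
\[
 \sum_{i\in J} d_i = \sum_{i\in J}\sum_{j\in V}\alpha(i,j) \ge \sum_{i,j\in J}\alpha(i,j) = \binom{|J|}{2},
\]
with equality for $J=V$. For condition \textbf{II}, the score of $i$ in the converse $G'$ equals $(n-1)-d_i$; since $G\cong G'$ the two score multisets coincide, and sorting each in non--decreasing order forces $d_i=(n-1)-d_{n+1-i}$.

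For sufficiency, assume \textbf{I} and \textbf{II}. First I would apply Moon's theorem: condition \textbf{I} produces a generalised tournament $H=(V,\alpha_H)$ which, after relabelling the vertices, satisfies $\sum_{j}\alpha_H(i,j)=d_i$ for each $i$. Let $\sigma$ be the order--reversing involution $\sigma(i)=n+1-i$ and let $H'$ be the converse of $H$. Relabelling $H'$ by $\sigma$ yields a function $\beta$ with $\beta(i,j)=1-\alpha_H(\sigma(i),\sigma(j))$ for $i\ne j$ and $\beta(i,i)=0$; being obtained from $H$ by two operations each preserving the class of generalised tournaments, $\beta$ is itself a generalised tournament, and a short computation gives its score at $i$ as $(n-1)-d_{\sigma(i)}=(n-1)-d_{n+1-i}$, which equals $d_i$ by condition \textbf{II}.

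The decisive step is to average. Set $\gamma=\tfrac12(\alpha_H+\beta)$. Since $[0,1]$ is convex and both $\alpha_H$ and $\beta$ satisfy the antisymmetry relation $\alpha(i,j)+\alpha(j,i)=1$ with vanishing diagonal, $\gamma$ is again a generalised tournament, with score $\tfrac12(d_i+d_i)=d_i$ at vertex $i$. It remains to check that $\gamma$ is self--converse with witness $\sigma$: using $\sigma^2=\mathrm{id}$, a direct expansion gives
\[
 \gamma(i,j)+\gamma(\sigma(i),\sigma(j))=1 \qquad (i\ne j),
\]
so that $\gamma(i,j)=1-\gamma(\sigma(i),\sigma(j))$, i.e. $\sigma$ is an isomorphism from $\gamma$ to its converse. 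Notice that this last identity holds for the averaged object irrespective of condition \textbf{II}; condition \textbf{II} enters only to make the scores of $\beta$, and hence of $\gamma$, equal to $(d_i)$.

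I do not expect a serious obstacle. The only points needing care are the bookkeeping that pins the score of $\beta$ to $(n-1)-d_{n+1-i}$ and the expansion verifying the self--converse identity; the fixed point of $\sigma$ in the odd case requires no special treatment, since $d_{(n+1)/2}=(n-1)/2$ by \textbf{II}. The conceptual content is simply that convex averaging of a generalised tournament with its $\sigma$--twisted converse forces self--converseness, a manoeuvre that fails in the integer setting (where half--integer entries are illegal) and there necessitates Eplett's more delicate combinatorial argument.
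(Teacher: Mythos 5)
Your proof is correct, and it takes a genuinely different --- and substantially shorter --- route than the paper. The paper proves sufficiency by reducing to Eplett's integer theorem: it blows each rational score $d_i$ up to $m$ integer scores $md_i+\frac{m-1}{2}$, invokes Theorem~\ref{thm:Eplett} to obtain a self--converse tournament on $mn$ vertices, shrinks back down by averaging edge weights over clusters of $m$ vertices (Lemma~\ref{lem:rational}), and then passes from rational to real sequences via an approximation lemma (Lemma~\ref{lem:tech}) together with a compactness argument in $[0,1]^{\binom{n}{2}}$. You instead start from Moon's theorem, which already covers arbitrary real sequences satisfying condition \textbf{I}, and symmetrise: averaging $\alpha_H$ with the twisted converse $\beta(i,j)=1-\alpha_H(\sigma(i),\sigma(j))$ gives $\gamma$ satisfying $\gamma(i,j)+\gamma(\sigma(i),\sigma(j))=1$ automatically (only $\sigma^2=\mathrm{id}$ and the antisymmetry of $\alpha_H$ are used), while condition \textbf{II} is what pins the scores of $\beta$, hence of $\gamma$, to $(d_i)$. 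Your key computations check out: the score of $\beta$ at $i$ is $(n-1)-d_{\sigma(i)}=(n-1)-d_{n+1-i}=d_i$ because $j\mapsto\sigma(j)$ carries $V\setminus\{i\}$ bijectively onto $V\setminus\{\sigma(i)\}$, and the convex combination preserves the range $[0,1]$, the relation $\gamma(i,j)+\gamma(j,i)=1$, and the vanishing diagonal; your necessity argument matches the paper's standard one. As for what each approach buys: yours eliminates Eplett's theorem, the blow--up construction, and the entire rational--approximation and compactness apparatus in one stroke, exploiting exactly the convexity that is unavailable in the $\{0,1\}$--valued setting (half--integer weights being illegal there, which is why the integer case genuinely needs Eplett's combinatorial construction); the paper's longer route exhibits the generalised theorem as a shadow of the integer one and, via Lemma~\ref{lem:rational}, yields rational--weight realisations of rational sequences as a by--product. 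Both arguments produce the same explicit witnessing isomorphism $\sigma(i)=n+1-i$.
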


In the remainder of this section, we will outline the ideas behind the proof of Theorem \ref{thm:real}. (Indeed, after reading the introduction, hopefully one should be able to fill in the missing details.) The details follow in Section \ref{sec:proofs}. It should be mentioned that the ideas are very similar to those in \cite{Thornblad2016c}, in which Moon's result is derived from Landau's result, but some technical details differ.

The proof is carried out in two steps. First, an extension to the case when the score sequence is rational, then to the case when it is real.

\begin{lemma}\label{lem:rational}
  A non--decreasing sequence $(d_i)_{i=1}^n$ of non--negative rational numbers is the score sequence of some self--converse generalised tournament if and only conditions \textbf{I} and \textbf{II} are satisfied.
\end{lemma}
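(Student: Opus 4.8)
The plan is to deduce the rational case from Eplett's integer theorem (Theorem \ref{thm:Eplett}) by a blow--up/projection argument, in the spirit of deriving Moon's theorem from Landau's. Necessity is immediate: condition \textbf{I} holds for the score sequence of any generalised tournament, while if $G$ is self--converse via an isomorphism $\rho$, then the outdegree of $i$ in $G$ equals that of $\rho(i)$ in $G'$, which is $(n-1)-d_{\rho(i)}$; hence the scores are symmetric about $(n-1)/2$ and, once sorted, satisfy $d_i+d_{n+1-i}=n-1$.

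For sufficiency, I would write $d_i=p_i/q$ with a common denominator $q$ and integers $p_i\ge 0$, and blow up each vertex $i$ into a block $V_i$ of $q$ vertices, so that $N:=nq$. I then prescribe an integer score sequence $(e_u)_{u=1}^N$ whose restriction to each block $V_i$ sums to $qp_i+\binom q2$, arranged so that the scores on $V_i$ and on $V_{n+1-i}$ are complementary (those on $V_{n+1-i}$ being $N-1$ minus those on $V_i$). Condition \textbf{II} for $(d_i)$ makes these block sums consistent and guarantees that the sorted sequence satisfies condition \textbf{II} on $N$ vertices; verifying condition \textbf{I} reduces, since the sequence is non--decreasing, to the prefix--sum inequalities $\sum_{u=1}^k e_u\ge\binom k2$, which follow from the corresponding inequalities for $(d_i)$ together with the exact value $\sum_i d_i=\binom n2$. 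Applying Theorem \ref{thm:Eplett} then yields a self--converse tournament $H=(\{1,\dots,N\},\beta)$ with score sequence $(e_u)$.

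Projecting back, I would set $\alpha(i,j)=q^{-2}\sum_{u\in V_i,\,v\in V_j}\beta(u,v)$ for $i\neq j$ and $\alpha(i,i)=0$. One checks directly that $\alpha(i,j)+\alpha(j,i)=1$ and that the outdegree of $i$ in $G=(\{1,\dots,n\},\alpha)$ equals $q^{-2}\bigl(\sum_{u\in V_i}\deg_H(u)-\binom q2\bigr)=p_i/q=d_i$, so $G$ realises the required scores.

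The hard part is that $G$ need not be self--converse: Eplett supplies some self--converse $H$, but its isomorphism onto $H'$ need not map blocks to blocks, and it is precisely a block--respecting symmetry that descends to an isomorphism of $G$ with $G'$. I would resolve this using condition \textbf{II} via the reversal involution $\tau(i)=n+1-i$: replacing $\alpha(i,j)$ on the off--diagonal by $\tfrac12\alpha(i,j)+\tfrac12\bigl(1-\alpha(\tau(i),\tau(j))\bigr)$ yields, as a convex combination, another generalised tournament; condition \textbf{II} ensures it still has scores $d_i$, and by construction $\tau$ is an isomorphism onto its converse, so the symmetrised $G$ is self--converse. A secondary, purely technical wrinkle is distributing the integer block sum $qp_i+\binom q2$ into $q$ individual integer scores when $q$ is even, so that the uniform value $p_i+(q-1)/2$ fails to be an integer; this is handled by splitting each complementary block pair into pairs of vertices whose scores sum to $N-1$.
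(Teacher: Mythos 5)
Your proposal is correct, and most of its skeleton --- blow up with a common denominator $q$, handle even $q$ by splitting blocks into complementary pairs, apply Theorem \ref{thm:Eplett}, then project by averaging $\alpha(i,j)=q^{-2}\sum_{u\in V_i,\,v\in V_j}\beta(u,v)$ --- is exactly the paper's. Where you genuinely diverge is at the step you rightly flag as the hard one. The paper resolves the block-alignment problem by appealing to the \emph{proof} of Eplett's theorem: it asserts that the isomorphism $\rho:H\to H'$ may be taken to be an involution consisting of $\lfloor mn/2\rfloor$ transpositions plus one fixed point, so that after relabelling $\rho(v_{i,\ell})=v_{n+1-i,m+1-\ell}$, and self-converseness then descends to $G$ through the averaging. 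You instead symmetrise: replacing $\alpha(i,j)$ by $\tfrac12\alpha(i,j)+\tfrac12\bigl(1-\alpha(\tau(i),\tau(j))\bigr)$ with $\tau(i)=n+1-i$ keeps $\alpha(i,j)+\alpha(j,i)=1$, keeps the score of vertex $i$ equal to $\tfrac12 d_i+\tfrac12\bigl((n-1)-d_{n+1-i}\bigr)=d_i$ by condition \textbf{II}, and makes $\tau$ an isomorphism onto the converse by a two-line computation. This is a clean and correct fix, and arguably preferable since it uses only the \emph{statement} of Eplett's theorem, not the structure of his construction. In fact your trick proves more than you claim: once you have \emph{any} generalised tournament with score sequence $(d_i)$ --- which Moon's theorem supplies directly from condition \textbf{I}, for rational and even for real $(d_i)$ --- the symmetrisation already produces a self-converse one with the same scores. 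So your final paragraph, taken alone, proves Lemma \ref{lem:rational} and indeed Theorem \ref{thm:real} with no blow-up, no appeal to Eplett, and no rational approximation or compactness argument; the entire middle of your proof (and of the paper's two-lemma strategy) becomes redundant. One small caution on the part you kept: your claim that condition \textbf{I} for the blown-up sequence $(e_u)$ follows from the prefix inequalities for $(d_i)$ together with $\sum_i d_i=\binom n2$ is immediate only for prefixes ending at block boundaries, where $q^2\sum_{i\le k}d_i+k\binom q2\ge q^2\binom k2+k\binom q2=\binom{kq}2$; for prefixes ending inside a block you need an extra observation --- for instance that $t\mapsto\sum_{u\le t}e_u-\binom t2$ has strictly decreasing increments on any run where $e_u$ is constant, hence is minimised at run boundaries --- or else the paper's device of realising $(e_u)$ outright as the score sequence of a blow-up of a Moon realisation, which sidesteps the arithmetic entirely.
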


The idea behind the proof of Lemma \ref{lem:rational} can be described as a  ``blow--up followed by a shrink--down''. More precisely, given a rational non--decreasing sequence $(d_i)_{i=1}^n$ satisfying conditions \textbf{I} and \textbf{II},  we consider instead a related sequence containing $mn$ integral elements, where $m$ is chosen so that $md_i$ is integral for all $i=1,\dots, n$. We show that this sequence satisfies the conditions of Theorem \ref{thm:Eplett}, so there exists a self--converse tournament $H$ having this sequence as its score sequence. After this we will divide the $mn$ vertices of $H$ into $n$ clusters of $m$ vertices. Each cluster will correspond to a vertex in a generalised tournament $G$, the edge weights between the vertices of which are obtained by averaging over the edge weights between the corresponding clusters in $H$. Finally we show that that the score sequence of $G$ is indeed $(d_i)_{i=1}^n$ and that $G$ is self--converse.

In order to carry out the extension to real sequences, we need the following approximation result.

\begin{lemma}\label{lem:tech}
 Let $(d_i)_{i=1}^n$ be a non--decreasing sequence of non--negative reals satisfying conditions \textbf{I} and \textbf{II}. Then there exist non--decreasing sequences $(d_i^{(m)})_{i=1}^n$ of non--negative rationals satisfying conditions \textbf{I} and \textbf{II} (for each $m\geq 1$) such that $d_i^{(m)} \to d_i$ as $m\to \infty$, for each $i=1,2, \dots, n$.
\end{lemma}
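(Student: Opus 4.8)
The plan is to realise the set of admissible sequences as a bounded polytope cut out by linear inequalities and equalities with integer coefficients, and then to invoke the density of rational points in such a polytope. First I would record the standard simplification that, \emph{for a non-decreasing sequence}, condition \textbf{I} is equivalent to the system of prefix inequalities $\sum_{i=1}^k d_i\ge\binom{k}{2}$ for $k=1,\dots,n$ (with equality at $k=n$): among all $J$ with $|J|=k$ the sum $\sum_{i\in J}d_i$ is minimised by $J=\{1,\dots,k\}$, so the prefix sets give the binding constraints. Consequently the feasible set
\[
P=\bigl\{x\in\mathbb{R}^n : 0\le x_1\le\cdots\le x_n,\ \textstyle\sum_{i=1}^k x_i\ge\binom{k}{2}\ (1\le k\le n),\ x_i+x_{n+1-i}=n-1\ (1\le i\le n)\bigr\}
\]
consists precisely of the non-decreasing sequences satisfying conditions \textbf{I} and \textbf{II}, and it is defined by finitely many linear constraints, all with integer coefficients. (Note the equality $\sum_i x_i=\binom{n}{2}$ at $k=n$ is forced by summing the condition \textbf{II} equalities, so it need not be imposed separately.)

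Next I would check that $P$ is bounded: since condition \textbf{II} forces $\sum_i x_i=\binom{n}{2}$ and each $x_i\ge 0$, every coordinate lies in $[0,\binom{n}{2}]$. Hence $P$ is a rational polytope, equal to the convex hull of its finitely many vertices $v_1,\dots,v_N$. Because $P$ is defined by a system with integer data, each vertex is the unique solution of a square rational linear subsystem, so by Cramer's rule each $v_j$ is a rational point, $v_j\in\mathbb{Q}^n$. This is the standard linear-programming fact that rational polytopes have rational vertices, and it is where the (routine) verification effort goes.

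Finally, the given sequence $d=(d_i)_{i=1}^n$ lies in $P$ by hypothesis, so $d=\sum_{j=1}^N\lambda_j v_j$ for some $\lambda$ in the standard simplex $\Delta_N$. Choosing rational weights $\lambda^{(m)}\in\Delta_N\cap\mathbb{Q}^N$ with $\lambda^{(m)}\to\lambda$ (possible since the rationals are dense in $\Delta_N$) and setting $d^{(m)}=\sum_{j=1}^N\lambda_j^{(m)}v_j$ produces, for each $m$, a rational point of $P$, because a rational convex combination of rational vectors is rational. Since membership in $P$ encodes exactly non-negativity, monotonicity, and conditions \textbf{I} and \textbf{II}, each $d^{(m)}$ is an admissible rational sequence, and $d^{(m)}\to d$ coordinatewise, which is the assertion of the lemma.

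The one genuinely delicate point — and the reason for routing the argument through the vertex decomposition rather than rounding coordinatewise — is feasibility on the boundary of $P$. A naive rational approximation of $d$ can violate the equalities of condition \textbf{II}, destroy monotonicity at indices where $d_i=d_{i+1}$, or break a tight prefix inequality $\sum_{i=1}^k d_i=\binom{k}{2}$. Expressing $d$ as a convex combination of rational vertices circumvents all three difficulties simultaneously, since $P$ is automatically closed under convex combinations; the substantive content is thus reduced to the two structural facts that $P$ is bounded and that its vertices are rational.
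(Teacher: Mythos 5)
Your argument is correct, but it takes a genuinely different route from the paper. The paper proceeds by an explicit perturbation: it fixes the indices where $d_k=(n-1)/2$, rounds the strictly smaller scores up to nearby rationals one at a time (choosing $d_{k-1}^{(m)}$ in an interval of the form $\bigl(d_{k-1},\min\{d_k^{(m)},d_{k-1}+1/m\}\bigr)$ so that monotonicity is preserved and each prefix sum only increases, which keeps condition \textbf{I} intact), and then defines the upper half by reflection, $d_i^{(m)}=n-1-d_{n+1-i}^{(m)}$, so that condition \textbf{II} holds by fiat; it also uses the observation that under condition \textbf{II} the prefix inequalities need only be checked for $k\le\lfloor n/2\rfloor$. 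You instead recognise the admissible non-decreasing sequences as a bounded polytope $P$ cut out by integer-coefficient constraints, invoke the standard facts that such a polytope has rational vertices and equals the convex hull of them, write $d=\sum_j\lambda_j v_j$, and rationalise the barycentric weights. All the steps you cite are sound: the reduction of condition \textbf{I} to prefix sums for non-decreasing sequences is exactly the paper's first observation; boundedness follows from $0\le x_i\le n-1$ via condition \textbf{II}; rationality of vertices is the usual Cramer's-rule argument; and density of $\mathbb{Q}^N\cap\Delta_N$ in $\Delta_N$ is standard (for a boundary $\lambda$ one first moves toward the rational barycenter and then rationalises within the affine hull $\sum_j\mu_j=1$ --- worth a sentence in a full write-up, since naive coordinatewise rounding leaves the simplex). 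Your approach buys robustness and generality: it handles all the boundary delicacies (tight prefix inequalities, ties $d_i=d_{i+1}$, the exact equalities of condition \textbf{II}) uniformly, and the same argument would apply verbatim to any sequence class defined by rational linear constraints. The paper's hands-on construction buys elementarity --- no linear-programming machinery --- plus the explicit rate $|d_i-d_i^{(m)}|<1/m$, which your proof does not directly provide (though it is not needed for the lemma as stated, which asks only for convergence).
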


Let us make two observations which are helpful in the proof of Lemma \ref{lem:tech}. First, since we assume that the sequence be non--decreasing, condition \textbf{I} need only be checked for $J=\{1,2,\dots, k\}$ for $k=1,2,\dots, n$. Second, if condition \textbf{II} is satisfied, then condition \textbf{I} need only be checked for $J=\{1,2,\dots, k\}$ for $k=1,2,\dots, \lfloor n/2 \rfloor$. These observations simplify the proof; the idea is then to do a small perturbation of the sequence $(d_i)_{i=1}^n$ so that it becomes rational, taking care not to disturb the validity of condition \textbf{I} or \textbf{II}.

Given Lemma \ref{lem:rational} and Lemma \ref{lem:tech}, the proof of Theorem \ref{thm:real} is not difficult. Given a real sequence  $(d_i)_{i=1}^n$, we will approximate it by rational sequences $(d_i^{(m)})_{i=1}^n$ as in Lemma \ref{lem:tech}. By Lemma \ref{lem:rational} we can find generalised self--converse tournaments on $n$ vertices with rational edge weights and scores sequences $(d_i^{(m)})_{i=1}^n$. The final step is to note that the set of edge weights is compact, so we may select a subsequence of the generalised tournaments such that all edge weights converge. The limit object will be a well--defined self--converse generalised tournament with score sequence $(d_i)_{i=1}^n$.

\section{Proofs}\label{sec:proofs}

\begin{proof}[Proof of Lemma \ref{lem:rational}]
  Let $(d_i)_{k=1}^n$ be a non--decreasing sequence of rational numbers satisfying conditions \textbf{I} and \textbf{II}. Since the $d_i$ are rational, there exist $k_i,m_i\in \mathbb{N}$ (with no common factors) such that $d_i=k_i/m_i$. Denote by $m$ the lowest common multiple of $m_i$. (If some $k_i=0$, we may take $m_i=1$; this may happen for at most one $i$.)
 
Let us assume that $m,n$ are both odd; the other cases require only minor modifications and are left to the reader. We first construct an $n\times m$--array which will contain the outdegrees of our blow--up. For $i=1,\dots, n$ and $\ell=1,\dots, m$, let
\begin{align}
c_{i,\ell} = md_i+\frac{m-1}{2}.
\end{align}
(For $m$ even, we can let the second term be $m/2$ for $\ell=1,2,\dots, m/2$ and $m/2+1$ for $\ell= m/2+1, \dots, m$.) Since we assume that the sequence $(d_i)_{i=1}^n$ be non--decreasing, also $c_{i,\ell}$ is non--decreasing in $i$. It is clear that $c_{i,\ell}\in \mathbb{N}$ for all $i=1,\dots, m$ and $\ell=1,\dots, n$. 

The fact that the $c_{i,\ell}$ satisfy condition \textbf{I} can be shown algebraically; this is done in \cite{Thornblad2016c} in greater generality. A more intuitive argument might be the following. Since $(d_i)_{i=1}^n$ satisfies Moon's condition, there exists a generalised tournament with score sequence $(d_i)_{i=1}^n$. Now consider the blow--up of this tournament, formed by copying each of the $n$ vertices into $m$ identical vertices, letting each cluster of $m$ vertices form a regular sub--tournament (since $m$ is odd, the score for each vertex within each subtournament is $(m-1)/2$). This proves the existence of a generalised tournament with outdegrees $c_{i,\ell} = md_i+\frac{m-1}{2}$, implying that condition \textbf{I} must be satisfied.

Next we show that $c_{i,\ell}$ satisfies $c_{i,\ell} + c_{n+1-i,m+1-\ell} = mn-1$ for all $i=1,\dots, n$ and $\ell=1,\dots, m$. This corresponds precisely to condition \textbf{II}. Since $c_{i,\ell}$ is constant for $i$ fixed, we may take $\ell=1$. We have
\begin{align}
 c_{i,1} + c_{n+1-i,m} = m(d_i+d_{n+1-i})+m-1 = m(n-1)+m-1 = mn-1,
\end{align}
so condition \textbf{II} is satisfied.

By Theorem \ref{thm:Eplett}, there exists a (non--generalised) self--converse tournament $H$ on $mn$ vertices with outdegrees $c_{i,\ell}$. Denote by $v_{i,\ell}$ the vertex of $H$ with outdegree $c_{i,\ell}$. Let $\rho$ be an isomorphism $H\to H'$. By the proof of Theorem \ref{thm:Eplett} in \cite{Eplett1979}, we may assume that the cycle decomposition of $\rho$ consists of $\lfloor mn/2 \rfloor$ transpositions and a single fixed point (which must be a vertex with outdegree $c_{\lceil n/2 \rceil,\cdot} = (mn-1)/2$). In other words, we may assume that
\begin{align}
 \rho(v_{i,\ell})=v_{n+1-i,m+1-\ell}
\end{align}
for all $i=1,\dots, n$ and $\ell=1,\dots, m$.

We define now a generalised tournament $G=(V(G),\alpha)$ on $n$ vertices $w_1,\dots, w_n$ as follows. For $i,j=1,2,\dots, n$, let
\begin{align}
 \alpha(w_i,w_j) = \frac{1}{m^2}\sum_{\ell=1}^m \sum_{k=1}^m \alpha_H(v_{i,\ell},v_{j,k})
\end{align}
where $\alpha_H$ denotes the edge weight function of $H$ (which is an indicator function and can only take values in $\{0,1\}$). Note that
\begin{align}
 \alpha(w_i,w_j)+\alpha(w_j,w_i) = \frac{1}{m^2} \sum_{\ell=1}^m \sum_{k=1}^m ( \alpha_H(v_{i,\ell},v_{j,k})+\alpha_H(v_{j,k},v_{i,\ell}) = 1
\end{align}
so $\alpha$ is a valid weight function, i.e. $G$ is well--defined. We claim that $G$ has score sequence $(d_i)_{i=1}^n$. To see this,
\begin{align}
 \sum_{\substack{j=1 \\ j\neq i}}^n \alpha(w_i,w_j) 
& = \frac{1}{m^2}\sum_{\substack{j=1 \\ j\neq i}}^n \sum_{\ell=1}^m \sum_{k=1}^m \alpha_H(v_{i,\ell},v_{j,k}) \\
& = \frac{1}{m^2} \sum_{\ell=1}^m \sum_{\substack{i=1 \\ i\neq j}}^n \sum_{k=1}^m \alpha_H(v_{i,\ell},v_{j,k}) \\
& = \frac{1}{m^2} \sum_{\ell=1}^m md_i \\
&= d_i.
\end{align}

Finally we need to show that $G$ is self--converse. Let $\rho_G:V(G)\to V(G')$ be the bijection $\rho_G(w_i)=w_{n+1-i}$. It suffices to show that $\alpha(w_i,w_j)=1-\alpha(w_{n+1-i},w_{n+1-j})$, the latter being equal to $\alpha'(\rho_G(w_i),\rho_G(w_j))$. 

Using the fact that $H$ is self--converse, we have, for any $i\neq j$,
\begin{align}
 \alpha(w_i,w_j) 
& = \frac{1}{m^2}\sum_{\ell=1}^m \sum_{k=1}^m \alpha_H(v_{i,\ell},v_{j,k}) \\
& = \frac{1}{m^2}\sum_{\ell=1}^m \sum_{k=1}^m \left(1-\alpha_H(v_{n+1-i,m+1-\ell},v_{n+1-j,m+1-k})\right) \\
& = 1- \frac{1}{m^2} \sum_{\ell=1}^m \sum_{k=1}^m \alpha_H(v_{n+1-i,m+1-\ell},v_{n+1-j,m+1-k})\\
& = 1 - \alpha(w_{n+1-i},w_{n+1-j}).
\end{align}
Hence $G$ and $G'$ are isomorphic, so $G$ is self--converse. This completes the proof.
\end{proof}

\begin{proof}
Let
\begin{align}
 n' = \argmax\{ k\in\{1,2,\dots, \lfloor n/2 \rfloor\} \ : \ d_{n'}<(n-1)/2\}.
\end{align}
(We may assume this exists; if not, then all scores are equal to $(n-1)/2$ and hence rational, so no approximation is necessary.)

Define first $d_{k}^{(m)}=d_k=(n-1)/2$ for all $k=n'+1,\dots, \lceil n/2 \rceil$. Pick some rational $d_{n'}^{(m)}$ in the interval $(d_{n'},\min\{ (n-1)/2 ,d_{n'}+1/m\})$. Proceed inductively; having picked $d_{k}^{(m)}$ for some $1<k\leq n'$ we pick a rational $d_{k-1}^{(m)}$ in the interval $(d_{k-1},\min\{d_k^{(m)},d_{k-1}+1/m\})$. Proceed until we have picked rationals $d_{n'}^{(m)},\dots, d_1^{(m)}$. For $i=\lceil n/2 \rceil+1,\dots, n$, we define $d_i^{(m)}=n-1-d_{n+1-i}^{(m)}$. 

By construction we have $d_1^{(m)}\leq d_2^{(m)} \leq \dots \leq d_n^{(m)}$ and that condition \textbf{II} is met. To see that condition \textbf{I} is met, note that
\begin{align}
 \sum_{i=1}^{k} d_i^{(m)} \geq \sum_{i=1}^k d_i \geq \binom{k}{2}
\end{align}
for any $k=1,2,\dots, \lfloor n/2 \rfloor$, which is enough by the observations after the statement of the Lemma in Section \ref{sec:intro} By construction we have $|d_i-d_i^{(m)}|<1/m$ for each $i=1,2,\dots, n$, so $d_i^{(m)}\to d_i$ as $m\to \infty$.
\end{proof}

\begin{proof}[Proof of Theorem \ref{thm:real}]
Let $(d_i)_{i=1}^n$ be a sequence of non--negative reals satisfying conditions \textbf{I} and \textbf{II}. By Lemma \ref{lem:tech}, for each $=1,2,\dots, n$, we can find rationals $d_i^{(m)}$ converging to $d_i$ as $m\to \infty$, such that $(d_i^{(m)})_{i=1}^n$ satisfies conditions \textbf{I} and \textbf{II} for each $m\geq 1$.

By Lemma \ref{lem:rational}, there exists tournaments $G_m=(V(G_m),\alpha_m)$ with score sequences $(d_i^{(m)})_{i=1}^n$, respectively. We may assume that $V(G_m)=\{1,2,\dots, n\}$, that vertex $i$ has outdegree $d_i^{(m)}$ for each $m\geq 1$ and that $\rho : \{1,2,\dots, n\} \to \{1,2,\dots n\}$ defined by $\rho(i)=n+1-i$ is an isomorphism between $G_m$ and its converse $G_m'$.

The edge weights of each tournament are defined by the numbers $\{\alpha_m(i,j) \ : \ i,j=1,\dots, n, \ i\neq j\}$ which may be seen as an element of the compact set $[0,1]^{\binom{n}{2}}$. By passing to a subsequence, we may assume that $\alpha_m(i,j)$ converges as $m\to \infty$, for all $i,j=1,2,\dots, n$.

Let $G=(V(G),\alpha)$ be the generalised tournament with $V(G)=\{1,2,\dots, n\}$ and $\alpha(i,j)=\lim_{m\to \infty}\alpha_m(i,j)$. We should verify that this is a well--defined generalised tournament, that it has the appropriate score sequence, and that it is self--converse.

To see that it is well--defined, note that
\begin{align}
 \alpha(i,j) + \alpha(j,i) = \lim_{m\to \infty}\alpha_m(i,j) + \lim_{m\to \infty}\alpha_m(j,i) = \lim_{m\to \infty}(\alpha_m(i,j)+\alpha_m(j,i)) = 1,
\end{align}
for any $i,j=1,2,\dots, n$ with $i\neq j$, so $G$ is well--defined. (Also $\alpha(i,j)\in [0,1]$ and $\alpha(i,i)=1$.) By construction it holds that the score sequence of $G$ is $(d_i)_{i=1}^n$. Finally we claim that $\rho:\{1,2, \dots, n\}\to \{1,2,\dots, n\}$ defined by $\rho(i)=n+1-i$ is an isomorphism between $G$ and $G'$. To see this, note that
\begin{align}
 \alpha(i,j) = \lim_{m\to \infty}\alpha_m(i,j) = \lim_{m\to \infty}\alpha_m'(\rho(i),\rho(j)) 
&= \lim_{m\to \infty}(1-\alpha_m(\rho(i),\rho(j))) \\
&= 1-\lim_{m\to \infty}\alpha_m(\rho(i),\rho(j)) \\
&= 1-\alpha(\rho(i),\rho(j)) \\
&= \alpha'(\rho(i),\rho(j)). 
\end{align}
This completes the proof.

\end{proof}

\bibliographystyle{abbrv}

\end{document}